\newtheorem{theorem}{Theorem}[section]
\newtheorem{definition}[theorem]{Definition}
\newtheorem{lemma}[theorem]{Lemma}
\newcommand{\Fq}{\ensuremath{\mathbb{F}_q}}
\newcommand{\Fd}{\ensuremath{\mathbb{F}_2}}
\newcommand{\PG}{{\rm PG}}
\newcommand{\B}{\ensuremath{\mathcal{B}}}
\newcommand{\I}{\ensuremath{\mathcal{I}}}
\newcommand{\Sm}{\ensuremath{\mathcal{M}}}
\begin{document}

\title[Tactical decompositions of designs over finite fields]
{Tactical decompositions of designs over finite fields}

\author{Anamari Naki\'{c}}
\address{University of Zagreb, Faculty of electrical engineering
and computing, Department of applied mathematics, Unska~3,
\hbox{HR-10000 Zagreb,} Croatia} \email{anamari.nakic@fer.hr}

\author{Mario Osvin Pav\v{c}evi\'{c}}
\address{University of Zagreb, Faculty of electrical engineering
and computing, Department of applied mathematics, Unska~3,
\hbox{HR-10000 Zagreb,} Croatia} \email{mario.pavcevic@fer.hr}

\subjclass{05B05}

\keywords{design over finite field; tactical decomposition}

\date{September, 2013}

\begin{abstract}
An automorphism group of an incidence structure $\I$ induces a tactical decomposition on $\I$.
It is well known that tactical decompositions of $t$-designs satisfy certain necessary conditions which can be expressed as equations in terms of the coefficients of tactical decomposition matrices. In this article we present results obtained for tactical decompositions of $q$-analogs of $t$-designs, more precisely, of $2$-$(v,k,\lambda_2;q)$ designs. We show that coefficients of tactical decomposition matrices of a design over finite field satisfy an equation system analog to the one known for block designs. Furthermore, taking into consideration specific properties of designs over the binary field $\Fd$, we obtain an additional system of inequations for these coefficients in that case. 

\end{abstract}

\maketitle

\section{Introduction and preliminary results}
Let $\Fq$ be the finite field of order $q$ and $\Fq^v$ the
vector space of dimension $v$ over the finite field $\Fq$.
An $r$-space is a subspace of $\Fq^v$ of dimension $r$.
The number of $r$-spaces of $\Fq^v$ is
$$
{v \brack r}_q = \frac{(q^v-1)\cdots(q^{v-r+1}-1)}{(q^r-1)\cdots(q-1)}.
$$
The number of $r$-spaces containing a fixed $s$-space, $s \le r$, is
$$
{v-s \brack r-s }_q.
$$
For every two subspaces $U$ and $V$, \textit{the dimension formula} holds:
$$
\dim\,(\langle U, V \rangle) = \dim \, U + \dim \, V - \dim\,(U \cap V).
$$
Designs over finite fields were first introduced in the 1970's, see~\cite{CI74},\cite{CII74},\cite{D76}.
First nontrivial designs over finite fields which are not spreads
were constructed in~\cite{ST87}.
\begin{definition}
A finite set $\B$ is called design over finite field
with parameters $t$-$(v,k,\lambda_t)$ if the following properties hold:
\begin{enumerate}
\item elements of $\B$ are $k$-spaces of the vector space $\Fq^v$ called blocks,
\item every $t$-space of $\Fq^v$
is contained in $\lambda_t$ blocks.
\end{enumerate}
\end{definition}
Designs over finite fields are also often called $q$-analogs of $t$-designs,
or shorter $q$-designs. A $t$-$(v,k,\lambda_t)$ \emph{design} is a finite incidence structure
$ ({\mathcal P}, {\mathcal B} ) $, where $\mathcal{P}$ is a
set of $v$ elements called \emph{points}, and $\mathcal{B}$ is a
multiset of nonempty $k$-subsets of $\mathcal{P}$ called \emph{blocks}
such that every set of $t$ distinct points is
contained in exactly $\lambda_t$ blocks. When parameters are not important,
$t$-$(v,k,\lambda_t)$ designs are shorter called $t$-designs. When $t=2$, designs are called \emph{block designs}.
Designs over finite fields are closely related
to $t$-designs. Every design $\B$ with parameters $2$-$(v, k, \lambda_2; q)$
gives a block design with parameters
$2$-$({v \brack 1}_q, {k \brack 1}_q , \lambda_2)$, where points are
identified with $1$-spaces of $\Fq^v$ and each
block is identified with the set of $1$-spaces it contains.
The inverse statement is not valid. For example, there are
block designs with parameters $2$-$(15,7,3)$
which cannot be constructed from the associated $2$-$(4,3,3;2)$ design.

If $\B$ is a design with parameters $t$-$(v, k, \lambda_t; q)$, then
$\B$ is a design with parameters $s$-$(v,k,\lambda_s; q)$,
$0 \le s \le t$, where
$$
\lambda_s = \lambda_t \frac{{v-s \brack t-s}_q}{{k-s \brack t-s}_q}.
$$
The number of blocks in $\B$ equals
$$
|\B| = \lambda_t \frac{{v \brack t}_q}{ {k \brack t}_q }.
$$
\emph{Automorphism} of $\B$
is a linear operator $\Phi \in GL_v(q)$ such that
$ \Phi\B = \B$.
The set $Aut \, \B$ of all automorphisms of $\B$
is a subgroup of the general linear group $GL_v(q)$,
called \emph{full automorphism group} of $\B$.
Any subgroup of $Aut \, \B$ is an automorphism group of $\B$.

\section{Tactical decompositions of designs over finite fields}
The idea of considering tactical decompositions of block designs
was first introduced by Dembowski~\cite{D68}. Equations
for coefficients of tactical decomposition matrices
for block designs are well known~\cite{JT85} and they were
used for constructions of many examples of block designs (listed
in~\cite{MR07}). These equations were generalized for any $t \ge 1$ in~\cite{KNP13}.
In this article we introduce tactical decompositions of
designs over finite fields for $t = 2$. We show that coefficients of
tactical decomposition matrices satisfy an equation system analog
to the one known for block designs. Furthermore, taking into consideration specific properties of designs over the binary field $\Fd$, we obtain an additional system of inequations for coefficients
of tactical decomposition matrices of a $2$-$(v,k,\lambda_2;2)$ design.
The system of equations and inequations for coefficients of tactical
decomposition matrices represents necessary conditions
for the existence of designs over finite fields with an
assumed automorphism group.

The Kramer-Mesner method~\cite{KM76} has been adopted and used
for construction of designs over finite fields, see~\cite{MB05}, \cite{BKL05}, \cite{MB13}. In~\cite{KNP11} it was introduced how a tactical decomposition of a $t$-design induced by an action
of a proposed automorphism group can be used for
the enhancement of the Kramer-Mesner method.
The necessary conditions on the existence of designs over finite fields
with an assumed automorphism group introduced in this article can be
implemented in the Kramer-Mesner method for construction of designs over
finite fields, in a manner analog to~\cite{KNP11}.

\begin{definition}
Let $\Psi$ be the set of all
$1$-spaces of a finite vector space $V$
over a finite field $\mathbb{F}$.
Elements of $\Psi$ shall be called \emph{points}.
A \emph{decomposition} of a design
$\B$ over a finite field $\mathbb{F}$ is a partition of the set of points
$\Psi = \Psi_1 \sqcup \cdots \sqcup \Psi_m$
and the set $\B = \B_1 \sqcup \cdots \sqcup \B_n$.
We say that a decomposition is \emph{tactical} if there
exist nonnegative integers $\rho_{ij}$,
$\kappa_{ij}$, $i = 1,\ldots,m$, $j = 1,\ldots,n$,
such that 
\begin{enumerate}
\item every point in $\Psi_i$ is contained
in $\rho_{ij}$ blocks in $\B_j$, 
\item each block in $\B_j$ contains
$\kappa_{ij}$ points in $\Psi_i$.
\end{enumerate}
Matrices $[\rho_{ij}]$ i $[\kappa_{ij}]$ are called
\emph{tactical decomposition matrices}.
\end{definition}

There are two trivial examples of tactical decomposition
of a design. The first example is obtained by putting
$n = m = 1$, and the second
by partitioning sets $\Psi$ and $\B$ into $1$-element subsets.
A nontrivial tactical decomposition can be obtained by
an action of an automorphism group $G \le Aut(\B)$ on a design.

\begin{theorem}
Let $G$ be an automorphism group of a
design over finite field $\B$.
Then the orbits of the set of points $\Psi$ and the orbits of $\B$
form a tactical decomposition.
\end{theorem}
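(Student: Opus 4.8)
The plan is to exploit the fact that every $g \in G$ is a linear bijection of $\Fq^v$ with $g\B = \B$, and hence acts as a permutation of the point set $\Psi$ (the $1$-spaces) that preserves the containment relation $P \subseteq B$ between a point $P$ and a block $B$. Write $\Psi = \Psi_1 \sqcup \cdots \sqcup \Psi_m$ for the partition of $\Psi$ into $G$-orbits, and $\B = \B_1 \sqcup \cdots \sqcup \B_n$ for the partition of $\B$ into $G$-orbits. The claim is that these two partitions constitute a tactical decomposition, and the whole proof amounts to producing the integers $\rho_{ij}$ and $\kappa_{ij}$ and checking the two defining properties.

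First I would verify property (1). Fix $i$ and $j$ and take two points $P, P' \in \Psi_i$. Since $\Psi_i$ is a single $G$-orbit, there is $g \in G$ with $gP = P'$. Because $\B_j$ is also a $G$-orbit, $g$ restricts to a bijection of $\B_j$ onto itself; and since $g$ is a linear bijection, $P \subseteq B$ holds if and only if $P' = gP \subseteq gB$. Hence $B \mapsto gB$ is a bijection from $\{\, B \in \B_j : P \subseteq B \,\}$ onto $\{\, B' \in \B_j : P' \subseteq B' \,\}$, so these sets have equal cardinality. This common value depends only on $i$ and $j$; calling it $\rho_{ij}$, we get that every point of $\Psi_i$ lies in exactly $\rho_{ij}$ blocks of $\B_j$. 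Property (2) is dual: fixing $i$ and $j$ and blocks $B, B' \in \B_j$, choose $g \in G$ with $gB = B'$; then $g$ permutes $\Psi_i$, and $P \subseteq B$ if and only if $gP \subseteq gB = B'$, so $P \mapsto gP$ biject s $\{\, P \in \Psi_i : P \subseteq B \,\}$ onto $\{\, P' \in \Psi_i : P' \subseteq B' \,\}$. The common cardinality is the desired constant $\kappa_{ij}$, and with $[\rho_{ij}]$, $[\kappa_{ij}]$ defined this way the orbit partitions satisfy the definition of a tactical decomposition.

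I do not expect a genuine obstacle here: the only facts used are that a linear bijection preserves containment between $1$-spaces and $k$-spaces (immediate) and that an element of $G$ permutes each $G$-orbit of points and each $G$-orbit of blocks (immediate from the definition of an orbit). The point of the argument is simply that lying in a given point-orbit, respectively block-orbit, is precisely the equivalence under which the relevant incidence counts become constant, exactly as in the classical theory of tactical decompositions of block designs.
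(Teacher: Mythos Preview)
Your argument is correct and is exactly the standard proof that group orbits yield a tactical decomposition. The paper itself states this theorem without proof, treating it as a well-known fact (cf.\ Dembowski~\cite{D68}), so there is nothing to compare against; your write-up supplies precisely the routine verification one would expect.
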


Let $\B$ be a design with parameters $2$-$(v,k,\lambda_2; q)$.
Let
$$
\Psi = \Psi_1 \sqcup \cdots \sqcup \Psi_m,\,\,\,
{\mathcal B} = {\mathcal B}_1 \sqcup \cdots \sqcup
{\mathcal B}_n,
$$
be a tactical decomposition of $\B$.
For $P \in \Psi$ we denote by $\I_P = \{ B \in \B \, | \, P \leq B \}$
the set of all blocks containing $P$. Obviously, $| \I_P | = \lambda_1$
and
\begin{eqnarray*}
\rho_{ij} & = & |\I_P \cap \B_j|, \enspace P \in \Psi_i, \\
\kappa_{ij} & = & | {B \brack 1} \cap \Psi_i|, \enspace B \in \B_j,
\end{eqnarray*}
where ${B \brack 1}$ is the set of all $1$-spaces of $B$.
Coefficients $\rho_{ij}$ and $\kappa_{ij}$
are not dependant on the choice of $P \in \Psi_i$ and of $B \in \B_j$
if and only if the decomposition is tactical.
It is easy to show that
\begin{gather}\label{eqfirst}
\begin{array}{l}
\displaystyle{\sum_{i=1}^m \kappa_{ij} = {k \brack 1}_q,} \\
\displaystyle{\sum_{j=1}^n \rho_{ij} = \lambda_1.} \\
\end{array}
\end{gather}
Double-counting of the set $\{ (P, B) \in \Psi_i \times \B_j \, : \, P \leq B\}$
yields
\begin{equation}
|\Psi_i| \cdot \rho_{ij} = |\B_j| \cdot \kappa_{ij}.
\end{equation}
Now, fix a point $P \in \Psi_{l}$. Double-counting of the set
$$
\{ (Q, B) \in \Psi_r \times \B \, : \, P, Q \leq B \}
$$
yields
\begin{equation}\label{start2}
\sum_{j=1}^n \rho_{l j}\kappa_{r j}  = \sum_{Q \in \Psi_r} |\I_P \cap \I_Q|.
\end{equation}
It is easy to compute the right-hand side of the previous expression.
Obviously, $\I_P \cap \I_Q = \{ B \in \B \, : \, \langle P,Q \rangle \leq B \}$
and so
$$
|\I_P \cap \I_Q| =
\left\{
\begin{array}{c l}
\lambda_1, & P = Q, \\
\lambda_2, & P \neq Q. \\
\end{array}
\right.
$$
Thus, we have obtained a system of equations for the coefficients
of tactical decomposition matrices.
\begin{theorem}
Assume $\B$ is a $2$-$(v,k,\lambda_2;q)$ design
with a tactical decomposition 
$$
\Psi = \Psi_1 \sqcup \cdots \sqcup \Psi_m,\,\,\,
{\mathcal B} = {\mathcal B}_1 \sqcup \cdots \sqcup {\mathcal B}_n.
$$
Let $[\rho_{ij}]$ and $[\kappa_{ij}]$ be the associated tactical decomposition matrices.
Then
\begin{equation}\label{eq2}
\sum_{j = 1}^{n} \rho_{l j} \kappa_{r j} = \left\{
\begin{array}{cl}
\lambda_2 \cdot |\Psi_r|, & l \neq r, \\
\lambda_1 + \lambda_2 \cdot (|\Psi_r|-1), & l = r.
\end{array} \right.
\end{equation}
\end{theorem}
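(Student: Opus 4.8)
The plan is to prove \eqref{eq2} by a single double-counting argument, refining the computation that already produced \eqref{start2}. Fix the indices $l$ and $r$, fix one point $P \in \Psi_l$, and consider the set
\[
S = \{\, (Q, B) \in \Psi_r \times \B \ : \ P \leq B \text{ and } Q \leq B \,\}.
\]
I will count $|S|$ in two ways.

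First, count $S$ by summing over blocks, grouped according to the parts $\B_1, \ldots, \B_n$. Since the decomposition is tactical, the number of blocks of $\B_j$ through our fixed $P$ equals $\rho_{lj} = |\I_P \cap \B_j|$, which does not depend on the choice of $P \in \Psi_l$; and for each such block $B \in \B_j$ the number of admissible partners $Q$ is the number of $1$-spaces of $B$ lying in $\Psi_r$, namely $\kappa_{rj}$. Hence $|S| = \sum_{j=1}^n \rho_{lj}\kappa_{rj}$, the left-hand side of \eqref{eq2}. Second, count $S$ by summing over $Q \in \Psi_r$: for fixed $Q$ the number of blocks containing both $P$ and $Q$ is $|\I_P \cap \I_Q|$. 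Here I invoke the identity $\I_P \cap \I_Q = \{B \in \B : \langle P,Q\rangle \leq B\}$ together with the dimension formula, which shows $\langle P,Q\rangle$ is a $1$-space if $Q = P$ and a $2$-space if $Q \neq P$; since $\B$ is a $1$-design with parameter $\lambda_1$ and a $2$-design with parameter $\lambda_2$, this gives $|\I_P \cap \I_Q| = \lambda_1$ when $Q = P$ and $\lambda_2$ otherwise. If $l \neq r$ then $P \notin \Psi_r$, so all $|\Psi_r|$ terms are of the second kind and $|S| = \lambda_2 \, |\Psi_r|$; if $l = r$ then the single term $Q = P$ contributes $\lambda_1$ and the remaining $|\Psi_r|-1$ terms contribute $\lambda_2$ each, so $|S| = \lambda_1 + \lambda_2(|\Psi_r|-1)$. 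Equating the two expressions for $|S|$ yields \eqref{eq2}.

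The argument is a straightforward double count, so I do not anticipate any real obstacle. The two points that genuinely need care are: (i) that tacticity is precisely what guarantees $\rho_{lj}$ and $\kappa_{rj}$ are well-defined constants, independent of the representatives $P \in \Psi_l$ and $B \in \B_j$ chosen when organizing the count; and (ii) the elementary but essential observation that the join of two \emph{distinct} points is a $2$-dimensional subspace, which is exactly what lets the $2$-design property supply the value $\lambda_2$. Everything beyond this is the bookkeeping already set up in the discussion preceding the statement.
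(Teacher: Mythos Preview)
Your proposal is correct and follows essentially the same double-counting argument as the paper: fix $P\in\Psi_l$, count pairs $(Q,B)\in\Psi_r\times\B$ with $P,Q\le B$ first over blocks (grouped by $\B_j$) and then over $Q$, using $|\I_P\cap\I_Q|=\lambda_1$ or $\lambda_2$ according as $Q=P$ or not. The only difference is that you spell out explicitly the two points (i) and (ii) that the paper leaves implicit.
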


\section{Improvements for the binary field}

Assume now that $\B$ is a design over the binary field $\Fd$
with parameters $2$-$(v,k,\lambda_2;2)$
and with an automorphism group $G \le GL_v(2)$.
Then the orbits of $\Psi$ and the orbits of $\B$ 
form a tactical decomposition
$$
\Psi = \Psi_1 \sqcup \cdots \sqcup \Psi_m,\,\,\,
{\mathcal B} = {\mathcal B}_1 \sqcup \cdots \sqcup
{\mathcal B}_n.
$$
Fix a point $P \in \Psi_{l}$. Double-counting of the set
$$
\{ (R, S, B) \in \Psi_r \times \Psi_{s} \times \B \, : \, P, R, S \leq B \}
$$
yields
\begin{equation}\label{qeq3}
\sum_{j=1}^m \rho_{lj}\kappa_{rj}\kappa_{sj} =
\sum_{R \in \Psi_r} \sum_{S \in \Psi_{s}} | \I_P \cap \I_{R} \cap \I_{S} |.
\end{equation}
Let $R \in \Psi_r$, $S \in \Psi_{s}$. Obviously
$$
\I_P \cap \I_{R} \cap \I_{S} = \{  B \in \B \, : \, \langle P, R, S \rangle \leq B \} =: \I_{P R S}.
$$
By the dimension formula, $1 \le \dim \langle P, R, S \rangle \le 3$.
For $R \in \Psi_r$, let $\Psi_{s}^{i}(R)$ be the set of all $Q \in \Psi_{s}$
such that $\dim \langle P, R, Q \rangle = i$, $i = 1,2,3$,
$$
\Psi_{s}^{i}(R) := \{ Q \in \Psi_{s} \, : \, \dim\langle P, R, Q \rangle = i\}.
$$
Sets $\Psi_{s}^{i}(R)$ are pairwise disjoint and form a partition of $\Psi_{s}$,
$$
\Psi_{s} = \Psi_{s}^{1}(R) \sqcup \Psi_{s}^{2}(R) \sqcup \Psi_{s}^{3}(R).
$$
Let
$$
\phi_{r s}^i = \sum_{R \in \Psi_r} \sum_{S \in \Psi_{s}^i(R)} | \I_{P R S} |, \,\,\, i= 1,2,3.
$$
Then
\begin{equation}\label{sumphi}
\sum_{j=1}^n \rho_{l j} \kappa_{r j} \kappa_{s j} =
\phi_{r s}^1 + \phi_{r s}^2 + \phi_{r s}^3.
\end{equation}
In the continuation, we compute $\phi_{r s}^1$, $\phi_{r s}^2$,
and obtain an upper bound for $\phi_{r s}^3$.
It is easy to see that
$$
| \I_{P R S} | =
\left\{
\begin{array}{cl}
\lambda_1, & S \in \Psi_{s}^{1}(R), \\
\lambda_2, & S \in \Psi_{s}^{2}(R). \\
\end{array}
\right.
$$
For $S \in \Psi_{s}^{3}(R)$ we can obtain only an upper bound,
\begin{equation}\label{qleql}
| \I_{P R S} | \le \min \{ \lambda_2, {v-3 \brack k-3}_q \} =: \varphi.
\end{equation}
Consequently, we can obtain only an upper bound
for the right-hand side of (\ref{sumphi}).

We denote the $3$ points of a $2$-space $\langle P, R \rangle$ of $\Fd^v$
by $P$, $R$ and $P+R$.
Let $\Sm_{r s}(P) \subseteq \Psi_r$
be the set of all points $R \in \Psi_r$, $P \neq R$, such that $P+R \in \Psi_{s}$,
$$
\Sm_{r s}(P) := \{ R \in \Psi_r \setminus \{P \} \, : \, P+R \in \Psi_{s} \}.
$$
Tactical decomposition of $\B$ is group-induced. Hence, the
cardinality of $\Sm_{r s}(P)$ is not dependant of the choice of $P \in \Psi_l$,
i.e. $|\Sm_{r s}(P)| = |\Sm_{r s}(P')|, \forall P' \in \Psi_l$.
We shall write $\sigma_{l r s} := |\Sm_{r s}(P)|$.
The cardinality of $\Psi_{s}^{i}(R)$, $i=1,2,3$,
varies depending on whether $R = P$, $R \in \Sm_{r s}(P)$ or otherwise.

\begin{lemma}\label{qlem}
Assume $\B$ is a $2$-$(v,k,\lambda_2;2)$ design
with an automorphism group $G$ and a $G$-induced
tactical decomposition 
$$
\Psi = \Psi_1 \sqcup \cdots \sqcup \Psi_m, \,\,\,
{\mathcal B} = {\mathcal B}_1 \sqcup \cdots \sqcup {\mathcal B}_n.
$$
Let $P \in \Psi_l$ and $R \in \Psi_r$. Then
$$
|\Psi_{s}^{1}(R)| =
\left\{
\begin{array}{cl}
1, & l = r = s \mbox{ i } R = P, \\
0, & \mbox{otherwise }.  \\
\end{array}
\right.
$$
Furthermore:
\begin{enumerate}
\item For $l \neq r \neq s \neq l$ holds
$$
\begin{array}{l}
|\Psi_{s}^{2}(R)| =
\left\{
\begin{array}{cl}
1, & R \in \Sm_{r s}(P), \\
0, & \mbox{otherwise}, \\
\end{array}
\right.  \\
\\
|\Psi_{s}^{3}(R)| =
\left\{
\begin{array}{cl}
|\Psi_{s}|-1, & R \in \Sm_{r s}(P), \\
|\Psi_{s}|, & \mbox{otherwise}. \\
\end{array}
\right. \\
\end{array}
$$
\item For $l = r = s$ holds
$$
\begin{array}{l}
|\Psi_{s}^{2}(R)| =
\left\{
\begin{array}{cl}
3, & R \in \Sm_{r s}(P), \\
|\Psi_{s}|-1, & R = P, \\
2, & \mbox{otherwise}, \\
\end{array}
\right. \\
\\
|\Psi_{s}^{3}(R)| =
\left\{
\begin{array}{cl}
|\Psi_{s}|-3, & R \in \Sm_{r s}(P), \\
0, & R = P, \\
|\Psi_{s}|-2, & \mbox{otherwise}. \\
\end{array}
\right. \\
\end{array}
$$
\item For $l = r \neq s$ holds
$$
\begin{array}{l}
|\Psi_{s}^{2}(R)| =
\left\{
\begin{array}{cl}
1, & R \in \Sm_{r s}(P), \\
|\Psi_{s}|, & R = P, \\
0, & \mbox{otherwise}, \\
\end{array}
\right. \\
\\
|\Psi_{s}^{3}(R)| =
\left\{
\begin{array}{cl}
|\Psi_{s}|-1, & R \in \Sm_{r s}(P), \\
0, & R = P, \\
|\Psi_{s}|, & \mbox{otherwise}. \\
\end{array}
\right. \\
\\
\end{array}
$$
\item For $l \neq r = s$ holds
$$
\begin{array}{l}
|\Psi_{s}^{2}(R)| =
\left\{
\begin{array}{cl}
2, & R \in \Sm_{r s}(P), \\
1, & \mbox{otherwise}, \\
\end{array}
\right. \\
\\
|\Psi_{s}^{3}(R)| =
\left\{
\begin{array}{cl}
|\Psi_{s}|-2, & R \in \Sm_{r s}(P), \\
|\Psi_{s}|-1, & \mbox{otherwise}. \\
\end{array}
\right. \\
\end{array}
$$
\end{enumerate}
\end{lemma}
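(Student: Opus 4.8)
The plan is to reduce everything to a single linear-algebraic observation valid over $\Fd$: any $2$-space over $\Fd$ contains exactly three points, and if $P\ne R$ span it then these three are $P$, $R$, and $P+R$, and they are pairwise distinct. Throughout I fix $P\in\Psi_l$ and $R\in\Psi_r$ and use that $\Psi_s=\Psi_s^1(R)\sqcup\Psi_s^2(R)\sqcup\Psi_s^3(R)$, so it suffices to pin down $|\Psi_s^1(R)|$ and $|\Psi_s^2(R)|$ and then read off $|\Psi_s^3(R)|=|\Psi_s|-|\Psi_s^1(R)|-|\Psi_s^2(R)|$.

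First I would compute $|\Psi_s^1(R)|$. Since $P$, $R$, $Q$ are all $1$-spaces, $\dim\langle P,R,Q\rangle=1$ holds if and only if $P=R=Q$; as $R=P$ forces $r=l$ and $Q=P\in\Psi_s$ forces $s=l$, this yields $|\Psi_s^1(R)|=1$ exactly when $l=r=s$ and $R=P$, and $0$ otherwise, as claimed. Next I would dispose of the rows with $R=P$, which occur only inside items (2) and (3) (there $r=l$): here $\langle P,R,Q\rangle=\langle P,Q\rangle$ has dimension $1$ if $Q=P$ and $2$ otherwise, so $\Psi_s^3(P)=\emptyset$ and $\Psi_s^2(P)=\Psi_s\setminus\{P\}$, whence $|\Psi_s^2(P)|=|\Psi_s|-1$ if $s=l$ and $|\Psi_s|$ if $s\ne l$; this matches the $R=P$ lines of (2) and (3).

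It remains to treat $R\ne P$. Then $\langle P,R\rangle$ is a genuine $2$-space, so $\langle P,R,Q\rangle$ has dimension $2$ if $Q\subseteq\langle P,R\rangle$, i.e. $Q\in\{P,R,P+R\}$, and dimension $3$ otherwise; in particular $\Psi_s^1(R)=\emptyset$ and $|\Psi_s^2(R)|=|\{P,R,P+R\}\cap\Psi_s|$. Now I would simply count membership, using $P\in\Psi_s\iff l=s$, $R\in\Psi_s\iff r=s$, and $P+R\in\Psi_s\iff R\in\Sm_{rs}(P)$ (the last by the very definition of $\Sm_{rs}(P)$, legitimate since $R\ne P$). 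Running over the four index patterns $l\ne r\ne s\ne l$, $l=r=s$, $l=r\ne s$, $l\ne r=s$, and within each over the alternatives $R\in\Sm_{rs}(P)$ or not, one reads off $|\Psi_s^2(R)|\in\{0,1,2,3\}$ precisely as stated, and then $|\Psi_s^3(R)|=|\Psi_s|-|\Psi_s^2(R)|$ supplies the remaining entries. The argument is routine once the $\Fd$-specific fact is isolated; the only thing to watch is the bookkeeping — in (1) and (4) the hypothesis $l\ne r$ already forces $R\ne P$, so no extra sub-case is needed, while in (2) and (3) the sub-case $R=P$ is disjoint from $R\in\Sm_{rs}(P)$, which excludes $P$ by definition.
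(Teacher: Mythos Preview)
Your argument is correct and follows essentially the same route as the paper: identify $\Psi_s^1(R)$ directly, observe that for $R\ne P$ one has $\Psi_s^2(R)=\{P,R,P+R\}\cap\Psi_s$, and recover $|\Psi_s^3(R)|$ by subtraction. The only cosmetic difference is organizational---you treat the $R=P$ sub-case once up front, whereas the paper folds it into the case-by-case analysis---but the content is the same.
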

\begin{proof}
It is easy to see that
$$
\Psi_{s}^1(R) =
\left\{
\begin{array}{cl}
\{ P \}, & l = r = s \mbox{ and } R = P, \\
\emptyset, & \mbox{otherwise}.
\end{array}
\right.
$$
We shall now determine $\Psi_{s}^2(R)$ in each of the four cases.
Then,
$$
\Psi_{s}^3(R) = \Psi_{s} \setminus (\Psi_{s}^1(R) \sqcup \Psi_{s}^2(R)).
$$
Note that for $R \in \Psi_r$ holds
$$
\Psi_{s}^2(R) \subseteq \{P, R, P + R\}.
$$
Let $l \neq r \neq s \neq l$. Then $P, R \not\in \Psi_{s}$, hence
$$
\Psi_{s}^2(R) =
\left\{
\begin{array}{cl}
\{P+R\}, & R \in \Sm_{r s}(P), \\
\emptyset, & R \not\in \Sm_{r s}(P). \\
\end{array}
\right.
$$
Let $l = r = s$. Then $P, R \in \Psi_l$ and
$$
\Psi_{s}^2(R) =
\left\{
\begin{array}{cl}
\{ P, R, P + R \}, & R \in \Sm_{r s}(P), \\
\Psi_{s} \setminus \{P\},  & R = P, \\
\{P, R\}, & \mbox{otherwise}. \\
\end{array}
\right.
$$
Let $l = r \neq s$. Then
$$
\Psi_{s}^2(R) =
\left\{
\begin{array}{cl}
\{P+R\}, & R \in \Sm_{r s}(P), \\
\Psi_{s}, & R = P, \\
\emptyset, & \mbox{otherwise}. \\
\end{array}
\right.
$$
Let $l \neq r = s$. Then
$$
\Psi_{s}^2(R) =
\left\{
\begin{array}{cl}
\{R, P+R \}, & R \in \Sm_{r s}(P), \\
\{R\}, & \mbox{otherwise}. \\
\end{array}
\right.
$$
\end{proof}
The following theorem gives additional necessary conditions
on the existence of a $2$-$(v,k,\lambda_2;2)$ design
with an assumed automorphism group.
\begin{theorem}\label{tmqeq5}
Assume $\B$ is a $2$-$(v,k,\lambda_2;2)$ design
with an automorphism group $G$ and a $G$-induced
tactical decomposition 
$$
\Psi = \Psi_1 \sqcup \cdots \sqcup \Psi_m, \,\,\,
{\mathcal B} = {\mathcal B}_1 \sqcup \cdots \sqcup {\mathcal B}_n.
$$
Let $[\rho_{ij}]$ and $[\kappa_{ij}]$ be the associated tactical decomposition matrices.
Then
\begin{equation*}\label{qeq22}
\begin{array}{l}
\displaystyle{\sum_{j = 1}^{n} \rho_{lj} \kappa_{rj} \kappa_{sj} =} \\
= \left\{
\begin{array}{ll}
 \sigma_{l r s} \cdot \lambda_2 + \phi_{r s}^3, & l \neq r \neq s \neq l, \\
\lambda_1 + (3 |\Psi_l| + \sigma_{l r s}-3)\lambda_2 + \phi_{r s}^3, & l = r = s, \\
(|\Psi_{s}| + \sigma_{l r s})\lambda_2 + \phi_{r s}^3,
& l = r \neq s \mbox{ or } l \neq r = s , \\
\end{array} \right. \\
\end{array}
\end{equation*}
and
$$
\phi_{r s}^3 \le \left\{
\begin{array}{ll}
(|\Psi_r| \cdot |\Psi_{s}| - \sigma_{l r s}) \varphi, &  l \neq r \neq s \neq l, \\
(|\Psi_l|^2 - 3|\Psi_l|- \sigma_{l r s} + 2) \varphi, & l = r = s, \\
(|\Psi_r| \cdot |\Psi_{s}| - |\Psi_{s}| - \sigma_{l r s}) \varphi, & l = r \neq s \mbox{ or } l \neq r = s, \\
\end{array}
\right.
$$
where $\varphi := \mbox{min}\{\lambda_2, {v-3 \brack k-3}_q \}$.
\end{theorem}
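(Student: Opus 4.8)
The plan is to derive both the equality and the inequality directly from the counting identity (\ref{sumphi}), combined with Lemma~\ref{qlem} and the values of $|\I_{PRS}|$ recorded just before the theorem. Recall that (\ref{qeq3})–(\ref{sumphi}) already express $\sum_{j=1}^{n}\rho_{lj}\kappa_{rj}\kappa_{sj}$ as $\phi_{rs}^1+\phi_{rs}^2+\phi_{rs}^3$, so it suffices to evaluate $\phi_{rs}^1$ and $\phi_{rs}^2$ exactly and to bound $\phi_{rs}^3$ from above; the claimed formulas then follow by substitution.

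First I would fix $P\in\Psi_l$ and rewrite each $\phi_{rs}^i$ as $\sum_{R\in\Psi_r}|\Psi_{s}^i(R)|\cdot c_i$, where $c_1=\lambda_1$ and $c_2=\lambda_2$, these being the constant values of $|\I_{PRS}|$ on $\Psi_{s}^1(R)$ and $\Psi_{s}^2(R)$ respectively; for $i=3$ only $|\I_{PRS}|\le\varphi$ is available, so $\phi_{rs}^3\le\varphi\sum_{R\in\Psi_r}|\Psi_{s}^3(R)|$. Thus everything reduces to computing the three sums $\sum_{R\in\Psi_r}|\Psi_{s}^i(R)|$, $i=1,2,3$, in each of the four mutual-equality patterns of $l,r,s$. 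Since Lemma~\ref{qlem} gives $\sum_{R}|\Psi_{s}^1(R)|=1$ when $l=r=s$ and $0$ otherwise, we already get $\phi_{rs}^1=\lambda_1$ for $l=r=s$ and $\phi_{rs}^1=0$ otherwise.

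Next I would carry out the remaining sums case by case, using that Lemma~\ref{qlem} partitions $\Psi_r$ according to whether $R=P$, $R\in\Sm_{rs}(P)$ (there are $\sigma_{lrs}$ such points), or neither, so each sum is an elementary count. For instance, when $l=r=s$ one obtains $\sum_{R}|\Psi_{s}^2(R)|=3\sigma_{lrs}+(|\Psi_l|-1)+2(|\Psi_l|-1-\sigma_{lrs})=3|\Psi_l|+\sigma_{lrs}-3$ and $\sum_{R}|\Psi_{s}^3(R)|=|\Psi_l|^2-3|\Psi_l|-\sigma_{lrs}+2$; the patterns $l\neq r\neq s\neq l$, $l=r\neq s$, and $l\neq r=s$ are handled identically. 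Plugging the resulting $\phi_{rs}^2=(\,\cdot\,)\lambda_2$ and $\phi_{rs}^1$ into (\ref{sumphi}) gives the stated equality, and multiplying the $\phi_{rs}^3$ sums by $\varphi$ gives the stated bound, recalling $\varphi:=\min\{\lambda_2,{v-3\brack k-3}_q\}$ from (\ref{qleql}).

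The argument is essentially bookkeeping, and there is no genuine obstacle beyond keeping the case analysis consistent with Lemma~\ref{qlem}. The one place that demands care is the two overlapping-index patterns $l=r\neq s$ and $l\neq r=s$: although they yield the same closed forms, the underlying distributions of $|\Psi_{s}^i(R)|$ over $\Psi_r$ differ (in the first pattern the point $R=P$ occurs and contributes $|\Psi_{s}|$ to $\phi_{rs}^2/\lambda_2$, while in the second $R=P$ never occurs but every $R$ already lies off $P$), so the two sums must be computed separately before one observes that they coincide. I would also verify the edge constraints $\sigma_{lrs}\le|\Psi_r|-1$ when $l=r$ and $\sigma_{lrs}\le|\Psi_r|$ otherwise, to ensure the ``otherwise'' counts are nonnegative, which makes the displayed bound on $\phi_{rs}^3$ meaningful.
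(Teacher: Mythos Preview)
Your proposal is correct and follows essentially the same approach as the paper's proof: fix $P\in\Psi_l$, invoke (\ref{sumphi}), and use Lemma~\ref{qlem} to partition $\Psi_r$ into $\{P\}$ (when $l=r$), $\Sm_{rs}(P)$, and the remainder, computing $\phi_{rs}^1,\phi_{rs}^2$ exactly and bounding $\phi_{rs}^3$ by $\varphi$ times the total size of the $\Psi_s^3(R)$'s. Your additional remarks---treating the two mixed cases $l=r\neq s$ and $l\neq r=s$ separately before noting they coincide, and checking the nonnegativity of the ``otherwise'' counts---are sound and mirror what the paper does implicitly.
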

\begin{proof}
Fix a point $P \in \Psi_l$. Then it holds
\begin{equation*}
\sum_{j = 1}^{n} \rho_{lj} \kappa_{rj} \kappa_{sj} = \phi_{r s}^1 + \phi_{r s}^2 + \phi_{r s}^3.
\end{equation*}
Applying Lemma \ref{qlem}, it is easy to compute $\phi_{r s}^1$ and $\phi_{r s}^2$,
and obtain an upper bound for $\phi_{r s}^3$.\\
Let $l \neq r \neq s \neq l$.
Then $\Psi_r = \Sm_{r s}(P) \sqcup \overline{\Sm}_{r s}(P)$,

$$
\phi_{r s}^2 = \sum_{R \in \Psi_r} \sum_{S \in \Psi_{s}^2(R)} \lambda_2
= \sum_{R \in \Sm_{r s}(P)} \sum_{S \in \Psi_{s}^2(R)} \lambda_2 =
 \sigma_{l r s} \cdot \lambda_2,
$$
while
$$
\begin{array}{ll}
\phi_{r s}^3 & = \displaystyle{\sum_{R \in \Psi_r} \sum_{S \in \Psi_{s}^3(R)} |\I_{P R S} |} \\
 & = \displaystyle{\sum_{R \in \Sm_{r s}(P)} \sum_{S \in \Psi_{s}^3(R)} |\I_{P R S} | +
\sum_{R \in \overline{\Sm}_{r s}(P)} \sum_{S \in \Psi_{s}^3(R)} |\I_{P R S} |} \\
& \leq \sigma_{l r s} (|\Psi_{s}|-1) \varphi + (|\Psi_r|-\sigma_{l r s})|\Psi_{s}| \varphi \\
 & = (|\Psi_r| \cdot |\Psi_{s}| - \sigma_{l r s}) \varphi .\\
 \end{array}
$$
Let $l = r = s$. Then $\Psi_r = \Sm_{r s}(P) \sqcup \{P \} \sqcup \overline{\Sm}_{r s}(P)$,
$$
\begin{array}{ll}
\phi_{r s}^2 & =
\displaystyle{\sum_{R \in \Sm_{r s}(P)} \sum_{S \in \Psi_{s}^2(R)} \lambda_2 +
\sum_{R \in \overline{\Sm}_{r s}(P)} \sum_{S \in \Psi_{s}^2(R)} \lambda_2 +
\sum_{S \in \Psi_{s}^2(P)} \lambda_2} \\
 & = 3 \sigma_{l r s} \lambda_2 + 2(|\Psi_{l}|-\sigma_{l r s}-1) \lambda_2
+ (|\Psi_{l}|-1) \lambda_2 \\
 & = (3|\Psi_l| + \sigma_{l r s} - 3)\lambda_2, \\
\end{array}
$$
while
$$
\begin{array}{ll}
\phi_{r s}^3 & =
\displaystyle{\sum_{R \in \Sm_{r s}(P)} \sum_{S \in \Psi_{s}^3(R)} |\I_{P R S} | +
\sum_{R \in \overline{\Sm}_{r s}(P)} \sum_{S \in \Psi_{s}^3(R)} |\I_{P R S} |}\\
 & \leq \sigma_{l r s} (|\Psi_{s}|-3) \varphi + (|\Psi_r|-\sigma_{l r s}-1)(|\Psi_{s}|-2) \varphi \\
 & = (|\Psi_l|^2 - 3|\Psi_l|- \sigma_{l r s} + 2) \varphi. \\
\end{array}
$$
Let $l = r \neq s$. Then $\Psi_l = \Sm_{r s}(P) \sqcup \{ P \} \sqcup \overline{\Sm}_{r s}(P)$,
$$
\begin{array}{ll}
\phi_{r s}^2 & =
\displaystyle{\sum_{R \in \Sm_{r s}(P)} \sum_{S \in \Psi_{s}^2(R)} \lambda_2 +
\sum_{S \in \Psi_{s}^2(P)} \lambda_2} \\
& = (\sigma_{l r s} + |\Psi_{s}|)\lambda_2,
\end{array}
$$
while
$$
\begin{array}{ll}
\phi_{r s}^3 & = \displaystyle{\sum_{R \in \Sm_{r s}(P)} \sum_{S \in \Psi_{s}^3(R)} |\I_{P R S} | +
\sum_{R \in \overline{\Sm}_{r s}(P)} \sum_{S \in \Psi_{s}^3(R)} |\I_{P R S} |} \\
 & \leq (|\Psi_{s}|-1)\sigma_{l r s} \varphi + (|\Psi_r| - \sigma_{l r s} - 1)|\Psi_{s}| \varphi \\
 & = ( |\Psi_r| \cdot |\Psi_{s}| - |\Psi_{s}| - \sigma_{l r s}) \varphi . \\
\end{array}
$$
Let $l \neq r = s$. Then $\Psi_r = \Sm_{r s}(P) \sqcup \overline{\Sm}_{r s}(P)$,
$$
\begin{array}{ll}
\phi_{r s}^2 & =
\displaystyle{\sum_{R \in \Sm_{r s}(P)} \sum_{S \in \Psi_{s}^2(R)} \lambda_2 +
\sum_{R \in \overline{\Sm}_{r s}(P)} \sum_{S \in \Psi_{s}^2(R)} \lambda_2} \\
 & = 2 \sigma_{l r s} \lambda_2 + (|\Psi_{s}| - \sigma_{l r s}) \lambda_2 \\
 & = (|\Psi_{s}| + \sigma_{l r s}) \lambda_2, \\
\end{array}
$$
while
$$
\begin{array}{ll}
\phi_{r s}^3 & = \displaystyle{\sum_{R \in \Sm_{r s}(P)} \sum_{S \in \Psi_{s}^3(R)} |\I_{P R S} | +
\sum_{R \in \overline{\Sm}_{r s}(P)} \sum_{S \in \Psi_{s}^3(R)} |\I_{P R S} |} \\
 & \leq \sigma_{l r s} (|\Psi_{s}|-2) \varphi + (|\Psi_r| -
 \sigma_{l r s})(|\Psi_{s}|-1) \varphi \\
 & = (|\Psi_r| \cdot |\Psi_{s}| - |\Psi_r| - \sigma_{l r s}) \varphi. \\
\end{array}
$$
\end{proof}

Note that the application of the equality
$$
\kappa_{ij} = \frac{|\Psi_i|}{ |\B_j|} \rho_{ij}
$$
on the left-hand side of the expression (\ref{sumphi})
eliminates the coefficients $\kappa_{ij}$ from (\ref{sumphi}) 
and yields a system of inequations for the
coefficients $\rho_{ij}$
$$
\sum_{j = 1}^{n} \rho_{lj} \kappa_{rj} \kappa_{sj} =
\sum_{j=1}^n \frac{|\Psi_r| \cdot |\Psi_{s}|}{ |\B_j|^2}
\rho_{l j} \rho_{r j} \rho_{s j}.
$$
An analog relation is valid for the coefficients $\kappa_{ij}$ as well.

\section{Examples for some cyclic groups}

We shall now illustrate our results on the
example of a design $\B$ with parameters $2$-$(4,3,3;2)$.
Let $G = \langle \Phi \rangle \le GL_2(4)$,
$$
\Phi =
\left[
\begin{array}{cccc}
0 & 0 & 0 & 1\\
0 & 0 & 1 & 0\\
0 & 1 & 1 & 0\\
1 & 0 & 0 & 1\\
\end{array}
\right].
$$
Group $G$ is the cyclic group of order $3$. Assume $G$ is an automorphism group
of $\B$. Then
$$
\Psi = \Psi_1 \sqcup \cdots \sqcup \Psi_5,
$$
with respective orbit representatives
$\langle [  1,  0,  0,  0 ]\rangle$,
$\langle [  1,  0,  1,  0 ]\rangle$,
$\langle [  1,  0,  1,  1 ]\rangle$,
$\langle [  1,  1,  0,  0 ]\rangle$,
$\langle [  0,  1,  0,  0 ]\rangle$.
All orbits are of length $3$. The orbits of $\B$ are currently 
unknown to us, but it is obvious that these orbits are 
of length $3$. In addition, the orbits of $\Psi$
and the orbits of $\B$ form a tactical decomposition
$$
\Psi = \Psi_1 \sqcup \cdots \sqcup \Psi_5, \,\,\,
\B = \B_1 \sqcup \cdots \sqcup \B_5.
$$
Coefficients of a corresponding tactical decomposition matrix $[\rho_{ij}]$
must satisfy the equations (\ref{eqfirst}) and (\ref{eq2}),
\begin{gather*}
\sum_{j=1}^5 \rho_{ij} = 7, \,\,\, i = 1,\ldots,5, \\
\sum_{i=1}^5 \rho_{ij} = 7, \,\,\, j = 1,\ldots,5, \\
\sum_{j=1}^5 \rho_{ij}^2 = 13, \,\,\, i = 1,\ldots,5, \\
\sum_{j=1}^5 \rho_{r j}\rho_{s j} = 9, \,\,\, r \neq s.
\end{gather*}
There are two matrices, up to a rearrangement of
rows and columns, with coefficients that satisfy the above mentioned equations:
$$
\begin{array}{c c c}
\left[
\begin{array}{ccccc}
3 & 1 & 1 & 1 & 1 \\
1 & 3 & 1 & 1 & 1 \\
1 & 1 & 3 & 1 & 1 \\
1 & 1 & 1 & 3 & 1 \\
1 & 1 & 1 & 1 & 3 \\
\end{array}
\right]
&
,\,\,\,
&
\left[
\begin{array}{ccccc}
3 & 1 & 1 & 1 & 1 \\
1 & 2 & 2 & 2 & 0 \\
1 & 2 & 2 & 0 & 2 \\
1 & 2 & 0 & 2 & 2 \\
1 & 0 & 2 & 2 & 2 \\
\end{array}
\right].
\end{array}
$$
Note that the coefficients of the tactical decomposition matrices of a block design
with corresponding parameters $2$-$(15,7,3)$ also
necessarily satisfy this system of equations.

Furthermore, by the Theorem \ref{tmqeq5}, the coefficients
$\rho_{ij}$ satisfy an additional system of inequations.
First we determine the values $\sigma_{l r s}$,
$$
\sigma_{l r s} =
\left\{
\begin{array}{c l}
1, & l \neq r \neq s \neq l, \\
2, & l = r = s, \\
0, & l = r \neq s \mbox{ or } l \neq r = s.\\
\end{array}
\right.
$$
Coefficients $\rho_{ij}$ necessarily satisfy these inequations:
\begin{equation}\label{eqfail}
31 \, \le \, \sum_{j=1}^5 \rho_{l j}^3 \, \le \, 31, \,\,\, l = 1,\ldots,5,
\end{equation}
\begin{equation}
3 \, \le \, \sum_{j=1}^5 \rho_{l j} \rho_{r j} \rho_{s j} \, \le \, 11,\,\,\,
l \neq r \neq s \neq l,
\end{equation}
\begin{equation}
9 \, \le \, \sum_{j=1}^5 \rho_{l j}^2 \rho_{r j} \, \le \, 15,\,\,\,
l \neq r.
\end{equation}
Only one of the two constructed matrices satisfies these additional
constraints. For the matrix
$$
\left[
\begin{array}{ccccc}
3 & 1 & 1 & 1 & 1 \\
1 & 2 & 2 & 2 & 0 \\
1 & 2 & 2 & 0 & 2 \\
1 & 2 & 0 & 2 & 2 \\
1 & 0 & 2 & 2 & 2 \\
\end{array}
\right],
$$
it holds that
$$
\sum_{j=1}^5 \rho_{2 j}^3 = 25,
$$
a contradiction with inequation (\ref{eqfail}).
Hence, the associated tactical decomposition matrix $[\rho_{ij}]$
of a design with parameters
$2$-$(4,3,3;2)$ and automorphism group $G$, equals to the matrix
$$
\left[
\begin{array}{ccccc}
3 & 1 & 1 & 1 & 1 \\
1 & 3 & 1 & 1 & 1 \\
1 & 1 & 3 & 1 & 1 \\
1 & 1 & 1 & 3 & 1 \\
1 & 1 & 1 & 1 & 3 \\
\end{array}
\right],
$$
up to a rearrangement of rows and columns.
There is a unique $2$-$(4,3,3;2)$ design. It can be obtained
by taking all the hyperplanes of the projective space $\PG(3,2)$.
In general, the coefficients of the tactical decomposition matrices of a block design
with the corresponding parameters do not necessarily satisfy the system
of inequations from Theorem \ref{tmqeq5}. Namely, there are block designs
with parameters $2$-$(15,7,3)$ for each of the two constructed tactical
decomposition matrices. For the computation of the matrices we used
the program \verb"orbmat5qd" made by V. Kr\v{c}adinac \cite{KNP11}, application GAP \cite{GAP}
and our own programs.

Hereafter, we give another example.
Consider now a design $\B$ with parameters $2$-$(6,3,6;2)$. Let  $\Phi \in GL_6(2)$,
$$
\Phi =
\left[
\begin{array}{cccccc}
0 & 1 & 0 & 0 & 0 & 0 \\
0 & 0 & 1 & 0 & 0 & 0 \\
0 & 0 & 0 & 1 & 0 & 0 \\
0 & 0 & 0 & 0 & 1 & 0 \\
0 & 0 & 0 & 0 & 0 & 1 \\
1 & 0 & 0 & 0 & 1 & 1 \\
\end{array}
\right] .
$$
Then $G = \langle \Phi \rangle \le GL_6(2)$ is the cyclic group of order $31$.
Assume that $G$ is an automorphism group of $\B$. Then
$$
\Psi = \Psi_1 \sqcup \Psi_2 \sqcup \Psi_3,
$$
with respective orbit representatives
$\langle[  1,  1,  1,  1,  0,  1 ]\rangle$,
$\langle[  1,  0,  0,  0,  0,  0 ]\rangle$,
$\langle[  1,  0,  0,  0,  0,  1 ]\rangle$.
Moreover, $|\Psi_1|=1$, $|\Psi_2|=|\Psi_3| = 31$.
Furthermore, all orbits of $\B$, currently unknown to us, are necessarily of length $31$,
and the orbits of $\Psi$ and the orbits of $\B$ form a tactical decomposition
$$
\Psi = \Psi_1 \sqcup \Psi_2 \sqcup \Psi_3,\,\,\,
\B = \B_1 \sqcup \cdots \sqcup \B_{18}.
$$
In addition,
$$
\begin{array}{l}
\sigma_{1 r s} =
\left\{
\begin{array}{c l}
31, &  1 \neq r \neq s \neq 1, \\
0, & \mbox{otherwise}, \\
\end{array}
\right.
\\
\\
\sigma_{2 r s} =
\left\{
\begin{array}{c l}
30, & r \neq s,\, r, s = 2, 3, \\
1, & 2 \neq r \neq s \neq 2, \\
0, & \mbox{otherwise}, \\
\end{array}
\right.
\\
\\
\sigma_{3 r s} =
\left\{
\begin{array}{c l}
30, & r = s ,\, r, s = 2, 3, \\
1, & 3 \neq r \neq s \neq 3, \\
0, & \mbox{otherwise}. \\
\end{array}
\right.
\\
\end{array}
$$

We constructed $65$ matrices
satisfying the equations (\ref{eqfirst}) and (\ref{eq2})
for coefficients $\rho_{ij}$
of tactical decomposition matrices. Out of these $65$ matrices,
$3$ do not satisfy the system of inequations from Theorem \ref{tmqeq5}:
$$
\left[
\begin{array}{cccccccccccccccccc}
31 & 31 & 0 & 0 & 0 & 0 & 0 & 0 & 0 & 0 & 0 & 0 & 0 & 0 & 0 & 0 & 0 & 0\\
0 & 6 & 1 & 3 & 3 & 3 & 3 & 3 & 4 & 4 & 4 & 4 & 4 & 4 & 4 & 4 & 4 & 4\\
6 & 0 & 6 & 4 & 4 & 4 & 4 & 4 & 3 & 3 & 3 & 3 & 3 & 3 & 3 & 3 & 3 & 3\\
\end{array}
\right],
$$
$$
\left[
\begin{array}{cccccccccccccccccc}
31 & 31 & 0 & 0 & 0 & 0 & 0 & 0 & 0 & 0 & 0 & 0 & 0 & 0 & 0 & 0 & 0 & 0\\
0 & 6 & 2 & 2 & 2 & 3 & 3 & 4 & 4 & 4 & 4 & 4 & 4 & 4 & 4 & 4 & 4 & 4\\
6 & 0 & 5 & 5 & 5 & 4 & 4 & 3 & 3 & 3 & 3 & 3 & 3 & 3 & 3 & 3 & 3 & 3\\
\end{array}
\right],
$$
$$
\left[
\begin{array}{cccccccccccccccccc}
31 & 31 & 0 & 0 & 0 & 0 & 0 & 0 & 0 & 0 & 0 & 0 & 0 & 0 & 0 & 0 & 0 & 0\\
6 & 0 & 5 & 5 & 4 & 4 & 4 & 4 & 4 & 3 & 3 & 3 & 3 & 3 & 3 & 3 & 3 & 2\\
0 & 6 & 2 & 2 & 3 & 3 & 3 & 3 & 3 & 4 & 4 & 4 & 4 & 4 & 4 & 4 & 4 & 5\\
\end{array}
\right].
$$
Namely, the coefficients of these $3$ matrices
do not satisfy the inequality
$$
186 \, \le \, \sum_{j=1}^{18} \rho_{1j}\rho_{2j}\rho_{3j} \, \le \, 1116.
$$
For each of the remaining $62$ matrices we attempted to construct
a design over finite field with parameters $2$-$(6,3,6;2)$,
automorphism group $G$ and associated tactical decomposition
matrix $M$. For the construction we used a method analog to the one described in~\cite{KNP11}. 
We conclude that such design exists only when $M$ is
$$
\left[
\begin{array}{cccccccccccccccccc}
31 & 31 & 0 & 0 & 0 & 0 & 0 & 0 & 0 & 0 & 0 & 0 & 0 & 0 & 0 & 0 & 0 & 0 \\
3 & 3 & 0 & 0 & 4 & 4 & 4 & 4 & 4 & 4 & 4 & 4 & 4 & 4 & 4 & 4 & 4 & 4 \\
3 & 3 & 7 & 7 & 3 & 3 & 3 & 3 & 3 & 3 & 3 & 3 & 3 & 3 & 3 & 3 & 3 & 3 \\
\end{array}
\right].
$$
In~\cite{MB05} examples of $2$-$(6,3,6;2)$ were constructed. Using the Kramer-Mesner method the author constructed designs with an automorphism group $G = \langle \sigma^7 \rangle$, where $\sigma$ is the Singer cycle, hence, $G$ is the cyclic group of order $9$. The number of constructed designs is not reported. We could construct $330$ designs with given parameters, admitting the action of the cyclic group $G$ of order $31$.

\bibliographystyle{model1-num-names}

\end{document}